\documentclass[12pt]{article}

\usepackage{amsmath,amsthm,amsfonts,amssymb,color,graphicx,tikz,comment, pgfplots, xfrac}
\usepackage[colorlinks,citecolor=blue,urlcolor=blue]{hyperref}
\usetikzlibrary{shapes.geometric,positioning}
\excludecomment{codes}


\theoremstyle{definition}
\newtheorem{theorem}{Theorem}[section]

\newtheorem{lemma}[theorem]{Lemma}

\newtheorem{remark}[theorem]{Remark}
\numberwithin{equation}{section}

\setcounter{figure}{0}

\def\cD{\mathcal{D}}

\def\cF{\mathcal{F}}

\def\cH{\mathcal{H}}

\def\bE{\mathbb{E}}
\def\bN{\mathbb{N}}
\def\bR{\mathbb{R}}
\def\R{\mathbb{R}}

\newcommand{\ud}{\ensuremath{ \mathrm{d} }}

\topmargin -0.4in
\headsep 0.4in
\textheight 9.0in
\oddsidemargin 0.02in
\evensidemargin 0.1in
\textwidth 6.3in

\newcommand{\HH}{
  \begin{tikzpicture}[scale=0.95, transform shape,baseline={([yshift=-.5ex]current bounding box.center)}]
    \begin{axis}[
      axis lines = left,
      xtick={0,       0.25,           0.5,            0.75,           1},
      xticklabels={0, $\sfrac{1}{4}$, $\sfrac{1}{2}$, $\sfrac{3}{4}$, $1$},
      ytick={0,       0.25,           0.5,            0.75,           1},
      yticklabels={0, $\sfrac{1}{4}$, $\sfrac{1}{2}$, $\sfrac{3}{4}$, $1$},
      every axis x label/.style={at={(current axis.right of origin)},anchor=west,below=8pt,xshift=0.8em},
      every axis y label/.style={at={(current axis.north west)},left=0pt, yshift=0.4em},
      xlabel=$H_0$,
      ylabel=$H$,
      xmin=0.45,  xmax=1.10,
      ymin=-0.05, ymax=0.60,
      x=9em, y=9em
      ]
      \filldraw[gray]
        (axis cs:0.75, 0) --
        (axis cs:1,    0) --
        (axis cs:1,    0.5) --
        (axis cs:0.5,  0.5) --
        (axis cs:0.5,  0.25) --
        (axis cs:0.75, 0);

      \draw[dashed, very thick]
        (axis cs: 0.0,0.0) -- (axis cs: 1.0,0.0)
        (axis cs: 0.5,0.0) -- (axis cs: 0.5,1.0)
        (axis cs: 0.0,0.5) -- (axis cs: 1.0,0.5)
        (axis cs: 1.0,0.0) -- (axis cs: 1.0,1.0)
        (axis cs: 0.0,0.0) -- (axis cs: 0.0,1.0)
        (axis cs: 0.0,0.75) -- (axis cs: 0.75,0.0)
        (axis cs: 0.0,1.0) -- (axis cs: 1.0,1.0);
    \end{axis}
  \end{tikzpicture}}
\begin{document}

\title{Parabolic Anderson model with rough noise in space and rough initial conditions}
\author{
  Raluca M. Balan\footnote{
    University of Ottawa, Department of Mathematics and Statistics,
    150 Louis Pasteur Private, Ottawa, Ontario, K1G 0P8, Canada.
    E-mail address: \url{rbalan@uottawa.ca}.}
  \footnote{Research supported by a grant from the Natural Sciences and Engineering Research Council of Canada.}
  \and
  Le Chen\footnote{Corresponding author. Auburn University,
    Department of Mathematics and Statistics, 203 Parker Hall, Auburn, Alabama 36849, USA.
    E-mail address: \url{le.chen@auburn.edu}.}
  \and
  Yiping Ma\footnote{The Ottawa Hospital,
    2475 Don Reid Drive Ottawa, Ontario, K1H1E2, Canada.
    E-mail address: \url{yiping.ivyma@gmail.com}.}
  }
\date{June 20, 2022}
\maketitle

\begin{abstract}
  \noindent In this note, we consider the parabolic Anderson model on $\bR_{+} \times \bR$, driven
  by a Gaussian noise which is fractional in time with index $H_0>1/2$ and fractional in space with
  index $0<H<1/2$ such that $H_0+H>3/4$. Under a general condition on the initial data, we prove the
  existence and uniqueness of the mild solution and obtain its exponential upper bounds in time for
  all $p$-th moments with $p\ge 2$.
\end{abstract}

\noindent {\em MSC 2020:} Primary 60H15; Secondary 60H07

\vspace{1mm}

\noindent {\em Keywords:}
  stochastic partial differential equations,
  parabolic Anderson model,
  Malliavin calculus,
  rough initial condition,
  Dirac delta initial condition,
  rough Gaussian noise.

\section{Introduction}
In this paper, we study the {\it parabolic Anderson model} (PAM):
\begin{align} \label{E:PAM}
  \begin{cases}
    \dfrac{\partial u}{\partial t} (t,x)=\dfrac{1}{2} \dfrac{\partial u}{\partial x^2} u(t,x)+u(t,x)\dot{W}(t,x) \quad \quad t>0,\: x \in \bR,\\
    u(0,\cdot) = \mu_0,
  \end{cases}
\end{align}
with initial condition given by a non-negative Borel measure $\mu_0$ on $\bR$ such that
\begin{equation} \label{E:cond-mu0}
  \int_{\bR}e^{-a x^2}\mu_0(\ud x)<\infty \quad \text{for all $a>0$.}
\end{equation}
Initial conditions of this type (called {\it rough initial conditions}) were introduced in
\cite{chen.dalang:15:moments} and were considered later for the {\it stochastic heat equation} in
various settings; see, e.g., \cite{balan.chen:18:parabolic, chen.huang:19:comparison,
chen.kim:19:nonlinear} and references therein. The noise $\dot W$ is assumed to be a centered
Gaussian noise that is fractional in time and space with indices $H_0$, respectively $H$ in the
following range:
\begin{gather} \label{E:H-cond}
  (H_0,H)\in \left(\sfrac{1}{2},1\right) \times \left(0,\sfrac{1}{2}\right) \quad \text{and} \quad
  H+H_0>\sfrac{3}{4}, \quad \text{i.e.,} \text{\vspace{2em} \HH }.
\end{gather}

Rigorously, $\{W(\varphi);\varphi \in \cD(\bR_{+} \times \bR)\}$ is a zero-mean Gaussian process
with covariance\footnote{In this note, we denote by $\cF \varphi=\int_{\bR^d}e^{-i \xi \cdot x}
\varphi(x)\ud x$ the Fourier transform of a function $\varphi \in L^1(\bR^d)$.}:
 \begin{align*}
  \bE[W(\varphi)W(\psi)]=\alpha_{H_0}\int_{\bR_{+}^2} \int_{\bR}|t-s|^{2H_0-2} \cF \varphi(t,\cdot)(\xi)\overline{\cF \psi(s,\cdot)(\xi)}\mu(\ud \xi) \ud t\ud s=: \langle \varphi,\psi \rangle_{\cH},
\end{align*}
where $\alpha_{H_0}=H_0(2H_0-1)$ and $\mu(\ud \xi)=c_{H}|\xi|^{1-2H}\ud \xi $, with
$c_H=\Gamma(2H+1)\sin(\pi H)/(2\pi)$. Since $H<1/2$, we say that $W$ is {\it rough} in space.

We denote by $\cH$ the completion of $\cD(\bR_{+} \times \bR)$ with respect to the inner product
$\langle \cdot,\cdot \rangle_{\cH}$. Then $W=\{W(\varphi)\}_{\varphi \in \cH}$ is an isonormal
Gaussian process and we can use Malliavin calculus to define and analyze the solution to
\eqref{E:PAM}. We say that a process $u=\{u(t,x);t> 0,x\in \bR\}$ is a {\em Skorohod solution} of
\eqref{E:PAM} if it is adapted with respect to the filtration induced by $W$, and for all $t>0$ and
$x \in \bR$,
\begin{equation} \label{E:def-sol}
  u(t,x)=J_0(t,x)+\int_0^t \int_{\bR}G(t-s,x-y)u(s,y)W(\delta s,\delta y),
\end{equation}
where $ J_0$ is the solution to the homogeneous heat equation, i.e.,
\begin{align} \label{E:J0}
  J_0(t,x) := \int_{\bR}G(t,x-y)\mu_0(\ud y) \quad \text{with} \quad
  G(t,x)    = (2\pi t)^{-1/2}e^{- \frac{x^2}{2t}}.
\end{align}

The stochastic integral in \eqref{E:def-sol} is interpreted in the Skorohod sense, i.e. it is given
by the divergence operator from Malliavin calculus. We refer the reader to Section 1.3 of
\cite{nualart:06:malliavin} for the definition of this operator, and to
\cite{balan.chen:18:parabolic,hu.le:19:joint,hu.huang.ea:18:parabolic} for similar developments.
\bigskip

The following theorem is the main result of the present article.

\begin{theorem} \label{T:main}
  If $(H_0,H)$ satisfy \eqref{E:H-cond} and if $\mu_0$ satisfies \eqref{E:cond-mu0}, then equation
  \eqref{E:PAM} has a unique solution $u$ and this solution satisfies: for all $p \geq 2$, $t>0$ and
  $x \in \bR$,
  \begin{align} \label{E:main}
    \bE\left(|u(t,x)|^p\right) \leq C_1^p J_0^p(t,x) \exp\left(C_2 p^{\frac{H+1}{H}}t^{\frac{2H_0+H-1}{H}} \right),
  \end{align}
  where $C_1>0$ and $C_2>0$ are some constants which depend on $H_0$ and $H$.
\end{theorem}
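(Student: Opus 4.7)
My plan is to adapt the standard Wiener-chaos and Picard iteration approach to both the rough spatial noise and the rough initial measure. I would first iterate the mild formulation \eqref{E:def-sol} starting from $u^{(0)} \equiv J_0$. Since the Skorohod integral raises the chaos order by one, the Picard iterate $u^{(m)}$ has a truncated chaos expansion whose $n$-th kernel converges, as $m \to \infty$, to
\begin{equation*}
  f_n(s_1,y_1,\dots,s_n,y_n;t,x) = J_0(s_1,y_1) \prod_{i=2}^n G(s_i - s_{i-1}, y_i - y_{i-1})\, G(t - s_n, x - y_n)\, \mathbf{1}_{\{0<s_1<\cdots<s_n<t\}}.
\end{equation*}
Letting $\tilde f_n$ denote the symmetrization of $f_n$, the existence of a Skorohod solution $u(t,x) = \sum_{n\ge 0} I_n(\tilde f_n(\cdot;t,x))$ in $L^2(\Omega)$ reduces to showing $\sum_n n!\, \|\tilde f_n(\cdot;t,x)\|_{\cH^{\otimes n}}^2 < \infty$. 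Uniqueness in the Skorohod class is automatic since any such solution must share this chaos expansion.

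The technical heart is the kernel estimate
\begin{equation*}
  n!\, \|\tilde f_n(\cdot;t,x)\|_{\cH^{\otimes n}}^2 \le J_0(t,x)^2\, C^n\, \frac{t^{n\alpha}}{(n!)^{\beta}},
\end{equation*}
for constants $\alpha, \beta>0$ depending only on $H_0,H$. To obtain it I would pass to Fourier variables in space, using $\cF G(t,\cdot)(\xi) = e^{-t\xi^2/2}$ and $\cF J_0(t,\cdot)(\xi) = e^{-t\xi^2/2}\cF\mu_0(\xi)$. The spatial $\cH$-norm involves the spectral density $|\xi|^{1-2H}$, which \emph{grows} at infinity because $H<1/2$, so every bit of integrability must come from the Gaussian decay of the heat kernels. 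Extracting the prefactor $J_0(t,x)^2$ through a Cauchy--Schwarz step applied to the two copies of $\mu_0$ (using $J_0(t,x)^2 = \int\int G(t,x-y)G(t,x-z)\mu_0(\ud y)\mu_0(\ud z)$) reduces the remaining bound to one that no longer involves $\mu_0$. The time integrals are then handled via Beta-function identities, while the restriction $H_0 + H > 3/4$ enters precisely as the condition guaranteeing finiteness of the base-level ($n=1$) space-time Fourier integral.

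With the kernel bound in place, Gaussian hypercontractivity yields $\|I_n(\tilde f_n)\|_p \le (p-1)^{n/2}\sqrt{n!}\,\|\tilde f_n\|_{\cH^{\otimes n}}$, so $\|u(t,x)\|_p \le J_0(t,x) \sum_n (p-1)^{n/2} C^{n/2} t^{n\alpha/2}/(n!)^{\beta/2}$. With $\beta$ proportional to $2H/(H+1)$ (the value dictated by the $|\xi|^{1-2H}$ spectral weight), Stirling's formula inside the series followed by a Laplace-type optimization in $n$ produces the bound $C_1\exp(C_2' p^{(H+1)/H} t^{(2H_0+H-1)/H})$, whose $p$-th power is exactly \eqref{E:main}.

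The hardest step is the kernel norm estimate. Unlike the smooth-in-space setting, the weight $|\xi|^{1-2H}$ is not in any $L^q$, so no direct Plancherel identity is available; the entire estimate must carefully balance Gaussian decay against spectral growth, and this is where the sharp threshold $H_0+H>3/4$ must be used nontrivially. Simultaneously, one has to produce a \emph{clean} factor $J_0(t,x)^2$ so that the dependence on the rough $\mu_0$ in \eqref{E:main} is correctly captured through $J_0$ alone, which requires interleaving the integration against $\mu_0$ with the frequency-side bounds rather than postponing it to the end.
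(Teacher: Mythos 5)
Your overall framework --- chaos expansion, a kernel bound of the form $n!\,\|\widetilde f_n\|_{\cH^{\otimes n}}^2 \le J_0^2(t,x)\,C^n t^{n\alpha}/(n!)^{\beta}$, then hypercontractivity and summation of the series --- is exactly the paper's, but there is a genuine gap at the step you yourself flag as the hardest: you never actually produce the kernel estimate, and the ingredients you name for it would not suffice. Two things are missing. First, the temporal covariance $\prod_j |t_j-s_j|^{2H_0-2}$ couples ${\bf t}$ and ${\bf s}$; ``Beta-function identities'' cannot be applied until this coupling is removed, which the paper does via Cauchy--Schwarz on $\psi^{(n)}_{t,x}$ together with the Littlewood--Hardy--Sobolev inequality \eqref{E:LHS} --- a step absent from your outline. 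Second, and more seriously, after extracting $J_0^2(t,x)$ (Lemma \ref{lem32-BC}) the rough initial condition forces a \emph{Brownian-bridge} time structure: the Gaussian factors are $\exp\{-\frac{t_{k+1}-t_k}{t_{k+1}t_k}|\sum_{j\le k} t_j\xi_j|^2\}$ rather than $\exp\{-(t_{k+1}-t_k)|\cdot|^2\}$ (see Remark \ref{R:Bridge}); this is precisely why the Laplace-transform/Beta-identity route that works for bounded initial data fails here. After the change of variables $\eta_k=\sum_{j\le k} t_j\xi_j$ and the bound $|\eta_k-\eta_{k-1}|^{1-2H}\le |\eta_k|^{1-2H}+|\eta_{k-1}|^{1-2H}$, the spatial integral splinters into $2^{n-1}$ terms indexed by the multi-indices of Lemma \ref{L:A}; each yields a time integral $\int \prod_i t_i^{\widetilde\alpha_i}(t_{i+1}-t_i)^{\widetilde\beta_i}\,\ud{\bf t}$ whose exponents depend on the multi-index, and Lemma \ref{L:Int} evaluates it as a product $\gamma_n$ of ratios of gamma functions \eqref{E:int}. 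Showing $\gamma_n\le 1$ \emph{uniformly over all $2^{n-1}$ multi-indices} is the paper's main new contribution (the path-moving argument of Step 4, resting on the monotonicity of $z\mapsto \Gamma(z+a)/\Gamma(z)$); nothing in your sketch substitutes for it, and without a uniform bound the $2^{n-1}$ gamma-ratio terms cannot be absorbed into $C^n$ while retaining the needed $(n!)^{-H}$ decay.

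There are also two smaller inaccuracies. The condition $H_0+H>3/4$ is not used only at level $n=1$: it is needed at every $n$ to guarantee $\widetilde\beta_j>-1$ when $a_j=2$, to verify hypothesis \eqref{ab-cond} of Lemma \ref{L:Int}, and again in the gamma-ratio comparison. And your exponent bookkeeping is off: the correct decay rate is $\beta=H$ (i.e.\ $\bE|J_n(t,x)|^2\le J_0^2(t,x)C^n(n!)^{-H}t^{n(2H_0+H-1)}$), which gives $\|u(t,x)\|_p\le C J_0(t,x)\exp(Cp^{1/H}t^{(2H_0+H-1)/H})$; the exponent $(H+1)/H$ on $p$ in \eqref{E:main} appears only after raising to the power $p$. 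The $p$-th power of your claimed pre-power bound $\exp(C p^{(H+1)/H}t^{\cdots})$ would instead produce $p^{(2H+1)/H}$, which is not \eqref{E:main}.
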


The novelty of our result is the fact that we consider rough initial condition. One prominent
example is the case $\mu_0=\delta_0$ where $\delta_0$ is the Dirac delta measure; see, e.g.,
\cite{amir.corwin.ea:11:probability}.

\begin{remark} \label{R:Bridge}
When the initial condition is a bounded function or a constant, the well-posedness was obtained by
X. Chen; see Theorem 1.2 of \cite{chen:19:parabolic}. The extension of X. Chen's work to rough
initial conditions is highly nontrivial. As we will see in the proof of Theorem \ref{T:main} below,
to estimate the moments of the solution, we need to compute a spatial integral and a time integral.
In \cite{chen:19:parabolic}, X. Chen uses the Laplace transform to handle the time integral first
and then computes the spatial integral. This method does not work for the rough initial data. In a
nutshell, moving from bounded initial data to rough initial data, one essentially changes the
underlying motion from the Brownian motion to the Brownian bridge. The time increments no longer
take the linear difference form; see Lemma \ref{lem32-BC}. To overcome this difficulty, we compute
the spatial integral first and then use Lemma \ref{L:Int} to compute the time integral. This leads
to a complicated expression which contains a product $\gamma_n$ of ratios of gamma functions; see
\eqref{E:int}. The most delicate part is to estimate $\gamma_n$. For this, we develop some novel
combinatorial techniques.
\end{remark}

Recently, Hu and L\^e obtained in Theorem 3.2 of \cite{hu.le:19:joint} both the well-posedness and
the following moment asymptotics: \footnote{This is relation (3.6) of \cite{hu.le:19:joint} with
$\alpha_0=2-2H_0$ and $\alpha=2H-1$; see also Remark 3.5 (ii) {\it ibid.} }
\begin{equation} \label{E:hu-le-bound}
       \bE\left(|u(t,x)|^p\right)
  \leq C_1^p t^{-p(\beta+\frac{2H-1}{4})}\exp \big(C_2 p^{\frac{H+1}{H}}t^{\frac{2H_0+H-1}{H}} \big),
  \quad \text{for all $p\ge 2$, $t>0$, $x\in\R$,}
\end{equation}
under weaker conditions on $\mu_0$, namely, $\mu_0$ is a Borel measure such that
\begin{equation} \label{E:HL-cond}
  \int_{\bR}\left(1+|\xi|^{-(H-1/2)}\right)e^{-t|\xi|^2} |\cF \mu_0(\xi)|\ud \xi \leq C t^{-\beta}
  \quad \text{for all $t>0$},
\end{equation}
for some $C>0$ and $\beta<H_0$, where $\cF \mu_0$ is the Fourier transform of $\mu_0$. Condition
\eqref{E:HL-cond} is more restrictive than \eqref{E:cond-mu0}; see Remark \ref{R:Init} for one
example. While our exponent in \eqref{E:main} recovers that of Hu and L\^e in \eqref{E:hu-le-bound},
the factor $J_0^p(t,x)$ in \eqref{E:main} looks more natural than the corresponding factor in
\eqref{E:hu-le-bound}. For example, when the initial condition is a bounded function (resp. the
delta initial measure), then as $t\to 0$, the factor $J_0^p(t,x)$ will not blow up (resp. blows up
at the exact rate $t^{-d/2}$ for $x=0$ and will not blow up for $x\ne 0$). But it is not clear
whether the factor $t^{-p(\beta+(2H-1)/4)}$ in \eqref{E:hu-le-bound} would blow up or not, which
depends on the sign of the exponent of $t$.

\begin{remark} \label{R:Init}
  Our condition on the initial data allows growing tails, for example, $\mu(\ud x) = x^2 \ud x$. In
  this case, $\int_\R e^{-ax^2} x^2\ud x= \sqrt{\pi}\: 2^{-1}a^{-3/2}<\infty$ for all $a>0$. Hence,
  condition \eqref{E:cond-mu0} is satisfied. But this initial condition cannot satisfy condition
  \eqref{E:HL-cond} because $\mathcal{F}[ x^2 ](\xi) = \delta''(\xi)$ (in the generalized sense,
  see, e.g., Theorem 7.4 of \cite{rudin:91:functional}), which is a genuine distribution and hence
  does not have module or absolute value.
\end{remark}

Finally, this paper can also be viewed as a continuation of \cite{balan.chen:18:parabolic} where the
case of rough initial conditions and $(H_0,H)\in (1/2,1)^2$ was covered.

\section{Proof of Theorem \ref{T:main}}

We denote by $I_n :\cH^{\otimes n} \to \cH$ the multiple Wiener integral of order $n$ with respect
to $W$. It is known that the solution $u$ exists if and only if $\sum_{n\geq 1}I_n(f_n(\cdot,t,x))$
converges in $L^2(\Omega)$, and in this case the solution has the Wiener chaos expansion:
\begin{gather*}
  u(t,x)=J_0(t,x)+\sum_{n\geq 1}I_n(f_n(\cdot,t,x)) \quad \text{with} \\
  f_n(t_1,x_1,\ldots,t_n,x_n,t,x)=\prod_{j=1}^{n}G(t_{j+1}-t_{j},x_{j+1}-x_{j})
  J_0(t_1,x_1)1_{\{0<t_1<\ldots<t_n<t\}},
\end{gather*}
and $t_{n+1}=t$ and $x_{n+1}=x$; see for instance
\cite{balan.chen:18:parabolic,hu.huang.ea:18:parabolic}. By the orthogonality of the terms in this
series, the necessary and sufficient condition for the existence of solution is:
\begin{equation} \label{ns-cond}
  \sum_{n\geq 1}n! \|\widetilde{f}_n(\cdot,t,x)\|_{\cH^{\otimes n}}^2 <\infty,
\end{equation}
where $\widetilde{f}_n(\cdot,t,x)$ is the symmetrization of $f_n(\cdot,t,x)$, defined by:
\begin{align*}
  \widetilde{f}_n(t_1,x_1,\ldots,t_n,x_n,t,x)=\frac{1}{n!}\sum_{\rho \in S_n}
  f_n(t_{\rho(1)},x_{\rho(1)},\ldots,t_{\rho(n)},x_{\rho(n)},t,x),
\end{align*}
where $S_n$ is the set of permutations of $\{1,\ldots,n\}$. For any ${\bf t}=(t_1,\ldots,t_n) \in
[0,t]^n$, ${\bf s}=(s_1,\ldots,s_n) \in [0,t]^n$, we denote
\begin{align*}
 \psi_{t,x}^{(n)}({\bf t},{\bf s})=  (n!)^2\int_{\bR^d}\mu(\ud \xi_1) \ldots \mu(\ud \xi_n)\:
        & \cF \widetilde{f}_n(t_1,\cdot,\ldots,t_n,\cdot,t,x)(\xi_1,\ldots,\xi_n) \\
 \times & \overline{\cF \widetilde{f}_n(s_1,\cdot,\ldots,s_n,\cdot,t,x)(\xi_1,\ldots,\xi_n)}.
\end{align*}

 We will use the following result.

\begin{lemma}[Lemma 3.2 of \cite{balan.chen:18:parabolic}]
\label{lem32-BC}
If $0<t_{\rho(1)}<\ldots<t_{\rho(n)}<t=:t_{\rho(n+1)}$, then
\begin{align*}
      \psi_{t,x}^{(n)}({\bf t},{\bf s})
  \le J_0^2(t,x)\int_{\bR^n} \prod_{k=1}^{n}\exp \left\{-\frac{t_{\rho(k+1)}-t_{\rho(k)}}{t_{\rho(k+1)}t_{\rho(k)}}
      \left|\sum_{j=1}^{k}t_{\rho(j)}\xi_j\right|^2\right\}\mu(\ud \xi_1)\ldots \mu(\ud \xi_n).
\end{align*}
\end{lemma}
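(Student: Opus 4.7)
The plan is to exploit the product/convolution structure of $f_n$ on the ordered time simplex, together with the non-negativity of $\mu_0$, via an explicit spatial Fourier transform computation.

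First, on $\{0 < t_{\rho(1)} < \cdots < t_{\rho(n)} < t\}$, only the permutation $\rho$ places $(t_1,\ldots,t_n)$ in the support simplex of $f_n$, so $n!\,\widetilde{f}_n(t_1,x_1,\ldots,t_n,x_n,t,x) = f_n(t_{\rho(1)},x_{\rho(1)},\ldots,t_{\rho(n)},x_{\rho(n)},t,x)$. Taking the spatial Fourier transform in the $x_j$'s and relabeling $\zeta_j := \xi_{\rho(j)}$, $r_j := t_{\rho(j)}$ (with $r_0:=0$, $r_{n+1}:=t$), the problem reduces to computing $\cF f_n(r_1,\cdot,\ldots,r_n,\cdot,t,x)(\zeta)$. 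Since $J_0(r_1,x_1)\prod_j G(r_{j+1}-r_j,x_{j+1}-x_j)$ is $\int \mu_0(\ud y)$ of the joint density at $(x_1,\ldots,x_n,x)$ of Brownian motion started at $y$ and sampled at times $r_1,\ldots,r_n,t$, its Fourier transform in $(x_1,\ldots,x_n)$ with $x_{n+1}=x$ frozen equals
\[
\int \mu_0(\ud y)\, G(t, x-y)\, \bE\!\left[e^{-i \sum_j \zeta_j B_{r_j}} \;\big|\; B_0 = y,\ B_t = x\right].
\]
The Brownian-bridge characteristic function factors as a complex phase in $(x,y)$ times $\exp(-\tfrac{1}{2} Q(\zeta))$ with the bridge variance
\[
Q(\zeta) \;:=\; \sum_{k=1}^{n} (r_k - r_{k-1})\Big(\sum_{j \ge k} \zeta_j\Big)^{\!2} \;-\; \frac{1}{t}\Big(\sum_{j=1}^{n} r_j \zeta_j\Big)^{\!2}.
\]

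Next, I would invoke the positivity of $\mu_0$: $\big|\cF[G(t,x-\cdot)\mu_0](\eta)\big|\le \int G(t,x-y)\mu_0(\ud y) = J_0(t,x)$ uniformly in $\eta\in\bR$. Absorbing the residual phase into this Fourier integral gives the pointwise bound $n!\,|\cF\widetilde{f}_n({\bf t},\cdot,t,x)(\xi)| \le J_0(t,x)\exp(-\tfrac{1}{2}Q(\zeta))$. Plugging this into the definition of $\psi^{(n)}_{t,x}({\bf t},{\bf s}) = (n!)^2\int \cF\widetilde{f}_n({\bf t})\,\overline{\cF\widetilde{f}_n({\bf s})}\,\ud\mu^{\otimes n}$, I bound $|a\overline{b}|\le|a||b|$ and then apply Cauchy--Schwarz on $L^2(\mu^{\otimes n})$ to extract the ${\bf t}$-contribution on the assumed simplex; the result is $J_0^2(t,x)\int \exp(-Q(\zeta))\,\mu(\ud\xi_1)\cdots\mu(\ud\xi_n)$, where the squaring/pairing has cancelled the $\tfrac{1}{2}$.

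The final ingredient is a purely algebraic identity, which I would prove by Abel summation (or by matching both sides coefficient-by-coefficient to $\sum_{i,j} r_{\min(i,j)}\zeta_i\zeta_j$):
\[
\sum_{k=1}^{n} (r_k - r_{k-1})\Big(\sum_{j \ge k} \zeta_j\Big)^{\!2} - \frac{1}{r_{n+1}}\Big(\sum_{j=1}^{n} r_j \zeta_j\Big)^{\!2} \;=\; \sum_{k=1}^{n} \frac{r_{k+1} - r_k}{r_{k+1}\,r_k}\Big(\sum_{j=1}^{k} r_j \zeta_j\Big)^{\!2}.
\]
Substituting this into the Gaussian bound and relabeling the dummy variables $\zeta\leftrightarrow\xi$ in the (symmetric) integrand produces exactly the right-hand side of the lemma.

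\textbf{Main obstacle.} The Brownian-bridge Fourier calculation is standard but bookkeeping-heavy and has to be arranged so that the non-negativity of $\mu_0$ is usable uniformly in the Fourier variable (i.e.\ the sup-norm bound $|\cF[G(t,x-\cdot)\mu_0]|\le J_0(t,x)$). The truly non-obvious step, however, is the algebraic identity above: it is what converts the natural bridge variance into the asymmetric coefficients $(r_{k+1}-r_k)/(r_{k+1}r_k)$ and the ``forward'' partial sums $\sum_{j\le k} r_j\xi_j$ appearing in the statement---this rewriting is precisely the ``Brownian bridge'' structure alluded to in Remark~\ref{R:Bridge} and is what makes the bound suitable for a rough initial measure $\mu_0$.
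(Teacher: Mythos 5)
The paper gives no proof of this lemma at all---it is imported verbatim as Lemma 3.2 of \cite{balan.chen:18:parabolic}---so your reconstruction can only be compared with that source, and it matches it in substance and is essentially correct: on the ordered simplex $n!\,\widetilde{f}_n$ reduces to $f_n$ composed with $\rho$; the spatial Fourier transform of the product of heat kernels is computed through the Brownian-bridge decomposition; the non-negativity of $\mu_0$ gives the uniform bound of the $y$-integral of the phase by $J_0(t,x)$; and the bridge variance is telescoped into the coefficients $(r_{k+1}-r_k)/(r_{k+1}r_k)$ acting on the partial sums $\sum_{j\le k}r_j\zeta_j$. I verified your algebraic identity (Abel summation with $S_k=\sum_{j\le k}r_j\zeta_j$, using $S_k^2-S_{k-1}^2=r_k\zeta_k(2S_{k-1}+r_k\zeta_k)$, recovers $\sum_{i,j}\zeta_i\zeta_j\min(r_i,r_j)-t^{-1}S_n^2$), and the relabelling $\zeta_j=\xi_{\rho(j)}$ together with the permutation invariance of $\mu^{\otimes n}$ is handled correctly. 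The one imprecision is in your Cauchy--Schwarz step: bounding each factor by $J_0(t,x)e^{-Q/2}$ and pairing them yields the stated right-hand side only when ${\bf s}={\bf t}$; for general ${\bf s}$ you get the geometric mean of the corresponding integrals for ${\bf t}$ and for ${\bf s}$, not an expression depending on ${\bf t}$ alone. Since the lemma's right-hand side is independent of ${\bf s}$ while the left-hand side is not, the diagonal case is evidently what is intended, and it is the only case the paper invokes (Step~1 of the main proof first reduces to $\psi^{(n)}_{t,x}({\bf t},{\bf t})$ by Cauchy--Schwarz before applying the lemma); still, you should either restrict your conclusion to ${\bf s}={\bf t}$ or record the geometric-mean form explicitly.
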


We will also use the following estimate, which is a consequence of H\"older's inequality with
$p=1/H$, and the {\it Littlewood-Hardy-Sobolev inequality} (see, e.g., \cite{memin.mishura.ea:01:inequalities}):
\begin{equation} \label{E:LHS}
       \alpha_{H_0}^n\int_{\bR_{+}^{2n}} \prod_{j=1}^{n}|t_j-s_j|^{2H_0-2}\varphi({\bf t}) \varphi({\bf s})\ud{\bf t} \ud{\bf s}
  \leq b_{H_0}^n \left(\int_{\bR_{+}^n} |\varphi({\bf t})|^{1/H_0} \ud{\bf t}\right)^{2H_0}.
\end{equation}

\begin{lemma} \label{L:A}
  For $n\ge 2$ and $x_1,\ldots,x_n \in \bR_+$, it holds that
  \begin{equation} \label{E:id}
    S_n:=x_1\prod_{k=2}^n(x_k+x_{k-1})=\sum_{a \in A_n}\prod_{j=1}^n x_j^{a_j},
  \end{equation}
  where $A_n$ is a set of indices $a=(a_1,\ldots,a_n)$ such that ${\rm card}(A_n)=2^{n-1}$ and
  \begin{subequations} \label{E:a}
  \begin{gather}
    a_1\in\{1,2\}, \quad a_{n} \in \{0,1\}, \quad a_2,\ldots,a_{n-1} \in \{0,1,2\}, \quad \\
    \sum_{j=1}^{i} a_j\in \left\{i,i+1\right\} \quad \text{for $i=1,\cdots, n-1$,} \sum_{j=1}^{n}
    a_j=n, \\
    a_i+a_{i+1} \in \{1,2,3\} \quad \text{for $i=2,\ldots, n-2$}, \\
    a_1+a_2\in \{2,3\} \quad \text{and} \quad a_{n-1}+a_n \in \{1,2\}.
  \end{gather}
  \end{subequations}
\end{lemma}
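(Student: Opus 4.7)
The plan is to prove the identity by fully expanding the product and parameterizing each resulting monomial by a binary ``choice sequence''. Write
\[
  S_n = x_1 \prod_{k=2}^n (x_k + x_{k-1}) = \sum_{(c_2,\ldots,c_n)} x_1 \cdot x_{c_2} x_{c_3} \cdots x_{c_n},
\]
where the sum ranges over all $2^{n-1}$ sequences $c = (c_2,\ldots,c_n)$ with $c_k \in \{k-1, k\}$ (``$c_k = k-1$'' meaning we pick the left summand in the $k$-th factor, etc.). For each such $c$, the exponent of $x_j$ in the corresponding monomial is
\[
  a_j(c) = [j = 1] + [c_j = j] + [c_{j+1} = j],
\]
with the conventions that the middle term drops when $j = 1$ and the last term drops when $j = n$.

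I would then verify each condition in \eqref{E:a} directly from this formula. The ranges $a_1 \in \{1,2\}$, $a_n \in \{0,1\}$, $a_j \in \{0,1,2\}$ for $2 \le j \le n-1$ are immediate. For the partial-sum condition, observe that among the factors $x_1, x_{c_2}, \ldots, x_{c_n}$, those landing in $\{x_1,\ldots,x_i\}$ are exactly $x_1$, all of $x_{c_2},\ldots,x_{c_i}$ (since $c_k \in \{k-1,k\} \subseteq \{1,\ldots,i\}$ for $k \le i$), and possibly $x_{c_{i+1}}$ if $c_{i+1} = i$; this gives $\sum_{j=1}^i a_j = i + [c_{i+1} = i] \in \{i, i+1\}$. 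Taking $i = n$ (formally with $c_{n+1}$ absent) gives $\sum_{j=1}^n a_j = n$. For the consecutive-pair bounds, using $[c_{k} = k-1] + [c_{k} = k] = 1$, one gets
\[
  a_i + a_{i+1} = [c_i = i] + 1 + [c_{i+2} = i+1] \in \{1,2,3\} \qquad (2 \le i \le n-2),
\]
and the boundary cases $a_1 + a_2 \in \{2,3\}$ and $a_{n-1} + a_n \in \{1,2\}$ follow by the same cancellation with one of the indicator terms absent.

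It remains to check the cardinality statement, for which I would exhibit an explicit inverse to the map $c \mapsto a(c)$ and thereby show it is a bijection onto $A_n$. Given $a$, set $c_2 = 1$ if $a_1 = 2$ and $c_2 = 2$ if $a_1 = 1$; then recursively, if $c_k$ is known, the identity $a_k = [c_k = k] + [c_{k+1} = k]$ determines $[c_{k+1} = k]$ and hence $c_{k+1}$. This shows distinct choice sequences yield distinct exponent tuples, so $|A_n| = 2^{n-1}$, completing the proof.

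I do not expect a serious obstacle here: the content is really a careful bookkeeping of a product expansion, and the only point requiring attention is keeping track of boundary indices $j=1$ and $j=n$ separately from interior indices when verifying the constraints and the injectivity of $c \mapsto a(c)$.
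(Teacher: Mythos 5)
Your proof is correct, but it takes a genuinely different route from the paper's. The paper argues by induction on $n$: writing $S_{n+1}=S_n(x_n+x_{n+1})$ and splitting each monomial of $S_n$ into the two children obtained by setting $(a_n'=a_n+1,\,a_{n+1}'=0)$ or $(a_n'=a_n,\,a_{n+1}'=1)$; the verification that the constraints \eqref{E:a} propagate is left implicit. You instead expand the product directly, parameterize the $2^{n-1}$ monomials by binary choice sequences $c_k\in\{k-1,k\}$, derive the closed formula $a_j=[j=1]+[c_j=j]+[c_{j+1}=j]$, and check every condition in \eqref{E:a} from it; your bookkeeping is accurate (e.g.\ $\sum_{j=1}^i a_j=i+[c_{i+1}=i]$ and the cancellation $[c_{i+1}=i]+[c_{i+1}=i+1]=1$ for the pair bounds both hold), and your explicit inverse $c\mapsto a(c)$ settles both the cardinality and the fact that no two choice sequences produce the same monomial, which is needed for \eqref{E:id} to hold without multiplicities. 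What your approach buys is that the verification of \eqref{E:a} is fully explicit and the choice-sequence encoding is essentially the lattice-path correspondence of Remark \ref{R:paths} (each $c_k$ records one step of the path), so your argument simultaneously proves the lemma and justifies the path picture used in Step 4 of the main proof; what the paper's induction buys is brevity. No gap.
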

\begin{proof}
  Clearly it holds for $n=2$. Assume that the statement is true for $n\ge 2$. Then
  \begin{align*}
       S_{n+1}
    =  S_n(x_n+x_{n+1})
    =  \sum_{a\in A_n} \prod_{i=1}^{n}x_i^{a_i}x_n^{a_n+1}+\sum_{a\in A_n}\prod_{i=1}^n x_i^{a_i} x_{n+1}
    =: \sum_{a'\in A_{n+1}} \prod_{i=1}^{n+1}x_i^{a_i'}.
  \end{align*}
  The statement for $n+1$ follows by considering separately two cases: (i) $a_1'=a_1,\ldots,
  a_{n-1}'=a_{n-1}, a_n'=a_n+1, a_{n+1}'=0$; and (ii) $a_1'=a_1,\ldots,,a_n'=a_n,a_{n+1}'=1$.
\end{proof}
\begin{remark} \label{R:paths}
  Lemma \ref{L:A} can also be proved using a path representation. Since this representation will be used in the proof of Theorem \ref{T:main}, we explain it here. To each monomial $x_1^{a_1}\ldots x_n^{a_n}$ in the
  expansion of $S_n$ one can associate a path starting from $(1,1)$ and going to $(n,n)$ or $(n,n-1)$, depending on whether $x_n$ is present or
  absent in the monomial. This path is composed of $n-1$ segments, which correspond to exponents
  $a_1,\ldots,a_{n-1}$ (in this order) and are constructed as follows:        \\
  -if the exponent is $0$, the path moves 1 unit to the right and 2 units up; \\
  -if the exponent is $1$, the path moves 1 unit to the right and 1 unit up; \\
  -if the exponent is $2$, the path moves 1 unit to the right and 0 units up. \\
  See Figure \ref{F:paths} for an illustration of this correspondence with $n=4$. See also Figure
  \ref{F:a} for the properties in \eqref{E:a}.
\end{remark}
\begin{figure}[htpb]

  \newcommand{\SamplePath}[3]{
    \begin{tikzpicture}[scale=1, transform shape]
      \tikzset{>=latex}
      \begin{axis}[
        axis lines = left,
        x=2em,y=2em,
        xtick={1,2,3,4},
        xtick={1,2,3,4},
        yticklabel style = {font=\tiny,xshift=0.5ex},
        xticklabel style = {font=\tiny,yshift=0.5ex},
        legend style={at={(axis description cs: 0.1,1.1)},anchor=north},
        legend style={draw=none},
        xmin=0.7, xmax=4.3,
        ymin=0.7, ymax=4.3,
        title={#1},  title style={at={(0.3,0.7)}},
        ]
        \filldraw (axis cs: 1,1) circle[radius=2pt] node [above,yshift=+2pt] {$x_1$};
        \filldraw (axis cs: 2,2) circle[radius=2pt] node [above,yshift=+2pt] {$x_2$};
        \filldraw (axis cs: 3,3) circle[radius=2pt] node [above,yshift=+2pt] {$x_3$};
        \filldraw (axis cs: 4,4) circle[radius=2pt] node [left, xshift=-2pt] {$x_4$};
        \filldraw (axis cs: 2,1) circle[radius=2pt] node [right,xshift=2pt]  {$x_1$};
        \filldraw (axis cs: 3,2) circle[radius=2pt] node [right,xshift=2pt]  {$x_2$};
        \filldraw (axis cs: 4,3) circle[radius=2pt] node [below,yshift=-3pt] {$x_3$};
        \draw[dashed,red,thick] (axis cs: 1,1)
                             -- (axis cs: 4,4)
                             -- (axis cs: 4,3)
                             -- (axis cs: 2,1)
                             -- (axis cs: 1,1);
       \node at (axis cs: 3.6,1.2) {\small\bf\em \textcolor{black}{#2}};
       #3;
    \end{axis}
    \end{tikzpicture}}

  \begin{center}

    \SamplePath{$x_1^2x_2x_3$}{2110}{\draw[very thick,->] (axis cs: 1,1) -- (axis cs: 2,1) -- (axis cs: 3,2)-- (axis cs: 4,3)}
    \SamplePath{$x_1^2x_2x_4$}{2101}{\draw[very thick,->] (axis cs: 1,1) -- (axis cs: 2,1) -- (axis cs: 3,2)-- (axis cs: 4,4)}
    \SamplePath{$x_1^2x_3^2$ }{2020}{\draw[very thick,->] (axis cs: 1,1) -- (axis cs: 2,1) -- (axis cs: 3,3)-- (axis cs: 4,3)}
    \SamplePath{$x_1^2x_3x_4$}{2011}{\draw[very thick,->] (axis cs: 1,1) -- (axis cs: 2,1) -- (axis cs: 3,3)-- (axis cs: 4,4)}

    \SamplePath{$x_1x_2^2x_3$ }{1210}{\draw[very thick,->] (axis cs: 1,1) -- (axis cs: 2,2) -- (axis cs: 3,2)-- (axis cs: 4,3)}
    \SamplePath{$x_1x_2^2x_4$ }{1201}{\draw[very thick,->] (axis cs: 1,1) -- (axis cs: 2,2) -- (axis cs: 3,2)-- (axis cs: 4,4)}
    \SamplePath{$x_1x_2x_3^2$ }{1120}{\draw[very thick,->] (axis cs: 1,1) -- (axis cs: 2,2) -- (axis cs: 3,3)-- (axis cs: 4,3)}
    \SamplePath{$x_1x_2x_3x_4$}{1111}{\draw[very thick,->] (axis cs: 1,1) -- (axis cs: 2,2) -- (axis cs: 3,3)-- (axis cs: 4,4)}

  \end{center}

  \caption{The eight paths for $n=4$, from $(1,1)$ to either $(4,3)$ or $(4,4)$, correspond to the
  eight monomials in the expansion of $S_4=x_1(x_1+x_2)(x_2+x_3)(x_3+x_4)$. All paths should stay in
the dashed envelope. The four digits correspond to the value of $(a_1,\cdots,a_4)$. }

  \label{F:paths}
\end{figure}
\begin{figure}[htbp!]
  \centering
  \begin{center}
    \begin{tikzpicture}[scale=1, transform shape, x=1.8em, y=1.6em]
      \tikzset{>=latex}

      \draw[->] (0,0) --++(7.5,0); 
      \foreach \x in {1,...,7}{
          \draw (\x,0.1)--++(0,-0.2) node [below] {\tiny $\x$};
          \draw[dotted] (\x,0)--++(0,7);
      }
      \draw[->] (0,0) --++(0,8) node[above] {$k$};
      \foreach \x in {1,...,7}{
          \draw (0.1,\x)--++(-0.2,0) node [left] {\tiny $\x$};
          \draw[dotted] (0,\x) --++ (7.7,0);
      }

      \node at (3.4,8) {$x_1^2\: x_2\: x_4\: x_5^2\: x_6$};

      \foreach \x in {1,...,7}{
        \filldraw (\x,\x) circle (-0.05) node[left,xshift=-4pt] {$x_{\x}$};
      }
      \foreach \x in {1,...,6}{
        \pgfmathsetmacro\result{\x}
        \filldraw (\x+1,\x) circle (-0.05) node [right, xshift=4pt] {$x_{\result}$};
      }

      \draw[dashed,red, thick] (1,1) -- (7,7) -- (7,6) -- (2,1) -- (1,1);
      \draw[->,very thick] (1,1) -- (2,1) -- (3,2) -- (4,4) -- (5,5) -- (6,5) -- (7,6);


      \def\displace{-2.2}
      \draw (4.4*\displace,7.8) --++ (-4*\displace,0);
      \draw (3.5*\displace,9) -- (3.5*\displace,0) ;

      \node[] () at (4*\displace,1) {$1$};
      \node[] () at (4*\displace,2) {$2$};
      \node[] () at (4*\displace,3) {$3$};
      \node[] () at (4*\displace,4) {$4$};
      \node[] () at (4*\displace,5) {$5$};
      \node[] () at (4*\displace,6) {$6$};
      \node[] () at (4*\displace,7) {$7$};
      \node[] () at (4*\displace,8.4) {$k$};

      \node[] () at (3*\displace,1) {$2$};
      \node[] () at (3*\displace,2) {$1$};
      \node[] () at (3*\displace,3) {$0$};
      \node[] () at (3*\displace,4) {$1$};
      \node[] () at (3*\displace,5) {$2$};
      \node[] () at (3*\displace,6) {$1$};
      \node[] () at (3*\displace,7) {$0$};
      \node[] () at (3*\displace,8.4) {$a_k$};

      \node[] () at (2.1*\displace,1) {$3$};
      \node[] () at (2.1*\displace,2) {$1$};
      \node[] () at (2.1*\displace,3) {$1$};
      \node[] () at (2.1*\displace,4) {$3$};
      \node[] () at (2.1*\displace,5) {$3$};
      \node[] () at (2.1*\displace,6) {$1$};
      \node[] () at (2.1*\displace,7) {--};
      \node[] () at (2.1*\displace,8.4) {$a_k+a_{k+1}$};

      \node[] () at (1.0*\displace,1) {$2$};
      \node[] () at (1.0*\displace,2) {$3$};
      \node[] () at (1.0*\displace,3) {$3$};
      \node[] () at (1.0*\displace,4) {$4$};
      \node[] () at (1.0*\displace,5) {$6$};
      \node[] () at (1.0*\displace,6) {$7$};
      \node[] () at (1.0*\displace,7) {$7$};
      \node[] () at (1.0*\displace,8.4) {$\sum_{j=1}^{k} a_j$};
    \end{tikzpicture}
  \end{center}

  \caption{Illustrations of properties in \eqref{E:a} with $n=7$. Each monomial in the expansion of
  \eqref{E:id} corresponds to a path from $(1,1)$ to either $(7,7)$ or $(7,6)$. The dots indicate the
possible choices for the position of the path. }

  \label{F:a}
\end{figure}
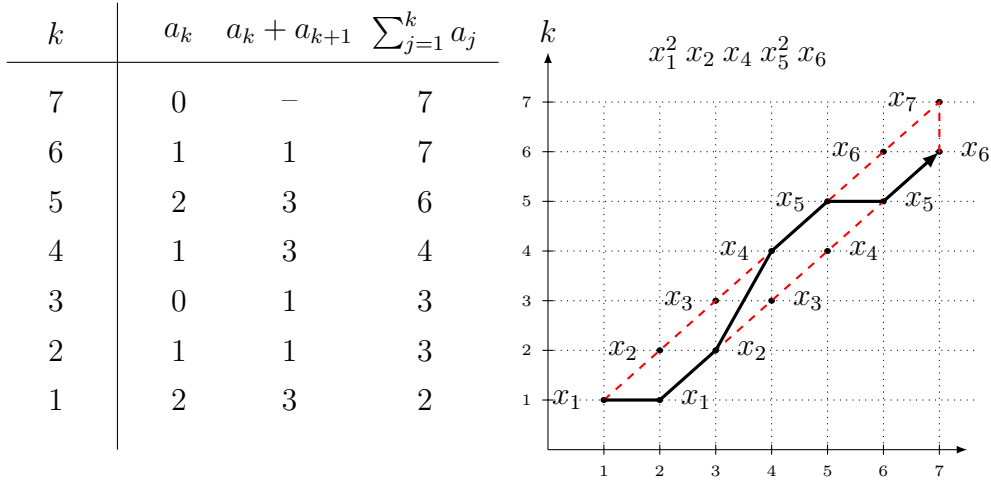

Now we are ready to prove Theorem \ref{T:main}.

\begin{proof}[Proof of Theorem \ref{T:main}]
Denote $J_n(t,x)=I_n(f_n(\cdot,t,x))$. Let $C$ be a constant which depends on $H_0$ and $H$ and may
be different from line to line. The proof consists of five steps: \medskip

{\noindent\em Step 1.} Note that $\psi_{t,x}^{(n)}({\bf t},{\bf s}) \leq \psi_{t,x}^{(n)}({\bf
t},{\bf t})^{1/2} \psi_{t,x}^{(n)}({\bf s},{\bf s})^{1/2}$ by the Cauchy-Schwarz inequality.
Combining this with inequality \eqref{E:LHS}, we obtain:
\begin{align} \label{E:e1}
  \nonumber \bE\left(|J_n(t,x)|^2\right)
  & =    n! \|\widetilde{f}_n(\cdot,t,x)\|_{\cH^{\otimes n}}^2
    =    \frac{1}{n!}\alpha_{H_0}^n \int_{[0,t]^{2n}}\prod_{j=1}^{n}|t_j-s_j|^{2H_0-2} \psi_{t,x}^{(n)}({\bf t},{\bf s}) \ud{\bf t}\ud{\bf s} \\
  & \leq \frac{1}{n!} b_{H_0}^n \left(\sum_{\rho \in S_n} \int_{0<t_{\rho(1)}<\ldots<t_{\rho(n)}<t} \psi_{t,x}^{(n)}({\bf t},{\bf t})^{\frac{1}{2H_0}}\ud{\bf t}\right)^{2H_0}.
\end{align}
By Lemma \ref{lem32-BC}, for any $\rho \in S_n$ fixed, we have:
\begin{align*}
  & \int_{0<t_{\rho(1)}<\ldots<t_{\rho(n)}<t}\psi_{t,x}^{(n)}({\bf t},{\bf t})^{\frac{1}{2H_0}}\ud{\bf t}
  \leq J_0^{1/H_0}(t,x)\int_{0<t_{\rho(1)}<\ldots<t_{\rho(n)}<t} \ud{\bf t} \\
  & \quad \times \left( \int_{\bR^n} \prod_{k=1}^{n}\exp \left\{-\frac{t_{\rho(k+1)}-t_{\rho(k)}}{t_{\rho(k+1)}t_{\rho(k)}}
  \left|\sum_{j=1}^{k}t_{\rho(j)}\xi_j\right|^2\right\}\mu(\ud \xi_1)\ldots \mu(\ud \xi_n)\right)^{\frac{1}{2H_0}}.
\end{align*}
Using the change of variables $t_{k}'=t_{\rho(k)}$ for $k=1,\ldots, n$,
we see that the integral on the right-hand side above does not depend on $\rho$. Hence,
\begin{align*}
       \int_{0<t_{\rho(1)}<\ldots<t_{\rho(n)}<t}\psi_{t,x}^{(n)}({\bf t},{\bf t})^{\frac{1}{2H_0}}\ud{\bf t}
  \leq J_0^{1/H_0}(t,x)  \int_{\{0<t_1<\ldots<t_n<t\}}I_{t}^{(n)}(t_1,\ldots,t_n)^{\frac{1}{2H_0}}\ud{\bf t},
\end{align*}
where
\begin{align*}
    I_{t}^{(n)}(t_1,\ldots,t_n)
  = \int_{\bR^n} \prod_{k=1}^{n}\exp \left\{-\frac{t_{k+1}-t_{k}}{t_{k+1}t_{k}}
    \left|\sum_{j=1}^{k}t_{j}\xi_j\right|^2\right\}\mu(\ud \xi_1)\ldots \mu(\ud \xi_n)
\end{align*}
and $t_{n+1}=t$. Taking the sum over all $\rho \in S_n$ and coming back to \eqref{E:e1}, we obtain:
\begin{equation} \label{I-step1}
       \bE\left(|J_n(t,x)|^2\right)
  \leq J_0^2(t,x) b_{H_0}^n  (n!)^{2H_0-1}
  \left( \int_{\{0<t_1<\ldots<t_n<t\}}I_{t}^{(n)}(t_1,\ldots,t_n)^{\frac{1}{2H_0}}\ud{\bf t}\right)^{2H_0}.
\end{equation}
This inequality is similar to Lemma 3.3 of \cite{balan.chen:18:parabolic}.
\medskip

{\noindent\em Step 2.} We now estimate $I_{t}^{(n)}(t_1,\ldots,t_n)$. We use the change of variables
$z_j=t_j\xi_j$ for $j=1,\ldots,n$, followed by $\eta_k=\sum_{j=1}^k z_j$ for $k=1,\ldots,n$. We
obtain:
\begin{align*}
& I_{t}^{(n)}(t_1,\ldots,t_n)= c_H^n \left(\prod_{i=1}^{n}t_i\right)^{2H-2} \int_{\bR^n}\ud \vec{z} \prod_{k=1}^{n}\exp \left\{-\frac{t_{k+1}-t_{k}}{t_{k+1}t_{k}}
\left|\sum_{j=1}^{k}z_j\right|^2\right\} |z_k|^{1-2H} \\
&=c_H^n\left(\prod_{i=1}^{n}t_i\right)^{2H-2} \int_{\bR^n}\ud \vec{\eta}
\left(\prod_{k=1}^{n}\exp \left\{-\frac{t_{k+1}-t_{k}}{t_{k+1}t_{k}} \left|\eta_k\right|^2\right\}\right)
|\eta_1|^{1-2H}\prod_{k=2}^{n}|\eta_k-\eta_{k-1}|^{1-2H}\\
& \leq
c_H^n \left(\prod_{i=1}^{n}t_i\right)^{2H-2} \int_{\bR^n} \ud \vec{\eta}
\left(\prod_{k=1}^{n}\exp \left\{-\frac{t_{k+1}-t_{k}}{t_{k+1}t_{k}} \left|\eta_k\right|^2\right\} \right)
|\eta_1|^{1-2H}\prod_{k=2}^{n}\left(|\eta_k|^{1-2H}|+|\eta_{k-1}|^{1-2H}\right),
\end{align*}
where $\ud \vec{z}=\ud z_1\cdots\ud z_n$ and similarly $\ud \vec{\eta}=\ud \eta_1\cdots\ud \eta_n$.

By Lemma \ref{L:A} (see also Remark \ref{R:paths} and Figure \ref{F:paths} for more explanations),
\begin{align*}
    |\eta_1|^{1-2H}\prod_{k=2}^{n}(|\eta_k|^{1-2H}|+|\eta_{k-1}|^{1-2H})
  = \sum_{a \in A_n}\prod_{j=1}^{n}|\eta_j|^{(1-2H)a_j}
  = \sum_{\alpha \in D_n}\prod_{j=1}^{n}|\eta_j|^{\alpha_j},
\end{align*}
where $D_n$ is the set of all multi-indices $\alpha=(\alpha_1,\ldots,\alpha_n)$ with
$\alpha_j=(1-2H)a_j$ for all $j=1,\ldots,n$, and $a=(a_1,\ldots,a_n)\in A_n$. Therefore,
\begin{align*}
  I_{t}^{(n)}(t_1,\ldots,t_n) \leq c_H^n \left(\prod_{j=1}^{n}t_j\right)^{2H-2}
  \sum_{\alpha \in D_n}
  \prod_{j=1}^n \left\{\int_{\bR} \exp \Big(-\frac{t_{j+1}-t_{j}}{t_{j+1}t_{j}} |\eta_j|^2
  \Big) |\eta_j|^{\alpha_j} d\eta_j \right\}&.
\end{align*}

Each of the integrals above can be computed explicitly. By Lemma 3.1 of \cite{balan.jolis.ea:15:spde},
\begin{align*}
  \int_{\bR}e^{-t|\xi|^2}|\xi|^{\alpha}\ud \xi=\Gamma\left(\frac{1+\alpha}{2}\right)
  t^{-\frac{1+\alpha}{2}} \quad \text{for any $t>0$ and $\alpha>-1$.}
\end{align*}
Hence,
\begin{align*}
  I_{t}^{(n)}(t_1,\ldots,t_n)
  & \leq C^n \left(\prod_{j=1}^{n}t_j\right)^{2H-2}
    \sum_{\alpha \in D_n}
    \prod_{j=1}^{n} \left( \frac{t_{j+1}-t_j}{t_j t_{j+1}}\right)^{-\frac{1+\alpha_j}{2}}\\
  &=C^n \sum_{\alpha \in D_n} t_1^{\frac{4H-3+\alpha_1}{2}}
    \left(\prod_{j=2}^{n}t_j^{\frac{4H-2+\alpha_{j-1}+\alpha_j}{2}}\right) t^{\frac{\alpha_n+1}{2}}
    \prod_{j=1}^{n} (t_{j+1}-t_j)^{-\frac{\alpha_j+1}{2}}\, .
\end{align*}
\medskip

{\noindent\em Step 3.} Taking power $1/(2H_0)$ and returning to \eqref{I-step1}, we obtain:
\begin{equation} \label{E:norm-f}
  J_n(t,x) \leq J_0^2(t,x) C^n (n!)^{2H_0-1} \left(\sum_{\alpha \in D_n} t^{\frac{\alpha_n+1}{4H_0}}\int_{\{0<t_1<\ldots<t_n<t\}}
    \prod_{i=1}^{n}t_i^{\widetilde{\alpha}_i} (t_{i+1}-t_i)^{\widetilde{\beta}_i}
    \ud{\bf t}\right)^{2H_0}
\end{equation}
where
\begin{align*}
  \widetilde{\alpha}_j =
  \begin{cases}
    \dfrac{4H-3+\alpha_1}{4H_0}              & j=1,\cr
    \dfrac{4H-2+\alpha_{j-1}+\alpha_j}{4H_0} & j=2,\ldots,n,
  \end{cases}
  \quad \text{and} \quad
  \widetilde{\beta}_j =
    -\dfrac{\alpha_j+1}{4H_0}, \quad  j=1,\ldots,n.
\end{align*}

Now we verify that the conditions of Lemma \ref{L:Int} hold for the integrals in \eqref{E:norm-f}.
Clearly, $\widetilde{\alpha}_1>-1$. When $\alpha_j=2(1-2H)$, condition $\widetilde{\beta}_j>-1$
becomes $H+H_0>\sfrac{3}{4}$; see \eqref{E:H-cond}. Now we verify that
\begin{equation} \label{ab-cond}
  \sum_{i=1}^{k} (\widetilde{\alpha}_i+\widetilde{\beta}_i)+k+1+\alpha_{k+1}>0 \quad \text{for all $k=1,\ldots,n-1$},
\end{equation}
using induction on $k$. For $k=1$, using again the condition $H_0+H>\sfrac{3}{4}$, we see that
\begin{align*}
    \widetilde{\alpha}_1+\widetilde{\beta}_1+\widetilde{\alpha}_2+2
  = \frac{8H_0+8H-6+\alpha_1+\alpha_2}{4H_0}
  > 0.
\end{align*}
Suppose that \eqref{ab-cond} holds for $k-1$. We write
\begin{align*}
  \sum_{i=1}^{k} (\widetilde{\alpha}_i+\widetilde{\beta}_i)+k+1+\alpha_{k+1}=
  \left(\sum_{i=1}^{k-1} (\widetilde{\alpha}_i+\widetilde{\beta}_i)+\widetilde{\alpha}_k+k\right)+
  \left(\alpha_{k+1}+\widetilde{\beta}_k+1\right),
\end{align*}
and we notice that $\alpha_{k+1}+\widetilde{\beta}_k+1=(4H_0+4H-3+\alpha_{k+1})/( 4H_0 )>0$.
Therefore, we can apply Lemma \ref{L:Int} to see that
\begin{equation} \label{E:int}
  \begin{gathered}
  \int_{\{0<t_1<\ldots<t_n<t\}}
  \prod_{i=1}^{n}t_i^{\widetilde{\alpha}_i} (t_{i+1}-t_i)^{\widetilde{\beta}_i}
  \ud{\bf t}=\frac{\Gamma\left(\widetilde{\alpha}_1+1\right) \prod_{i=1}^{n}\Gamma(\widetilde{\beta}_i+1)}{\Gamma\big(|\widetilde{\alpha}|+|\widetilde{\beta}|+n+1\big)}
  \gamma_n t^{|\widetilde{\alpha}|+|\widetilde{\beta}|+n}, \\
  \text{with} \quad
  \gamma_n:=\prod_{k=1}^{n-1}\frac{\Gamma\big(\sum_{i=1}^{k}(\widetilde{\alpha}_i+ \widetilde{\beta}_i)+k+1+\widetilde{\alpha}_{k+1}\big) }{\Gamma\big(\sum_{i=1}^{k}(\widetilde{\alpha}_i+\widetilde{\beta}_i)+k+1\big)}.
  \end{gathered}
\end{equation}
\medskip

{\noindent\em Step 4. } In this step, we will show that $\gamma_n\le 1$. Note that
\begin{align*}
  \widetilde{\alpha}_1+\widetilde{\beta}_1 = \frac{H-1}{H_0} \quad \text{and} \quad
  \widetilde{\alpha}_k+\widetilde{\beta}_k = \frac{4H-3+\alpha_{k-1}}{4H_0}, \quad k = 2,\ldots,n.
\end{align*}
Denote $\theta_k:=\sum_{i=1}^k(\widetilde{\alpha}_i+\widetilde{\beta}_i)+k+1$. Hence,
\begin{equation} \label{E:sum-ab}
  \theta_k=
  \begin{cases}
    \displaystyle \frac{H-1}{H_0}+2                                                              & \text{if $k=1$}, \\
    \displaystyle 1-\frac{1}{4H_0}+k\frac{4H_0+4H-3}{4H_0} +\frac{1-2H}{4H_0}\sum_{i=1}^{k-1}a_i & \text{if $k=2,\ldots,n$.}
  \end{cases}
\end{equation}
Note that
\begin{equation} \label{E:theta}
  \theta_{k} - \theta_{k-1} = \frac{4H_0+4H-3}{4H_0}+\frac{1-2H}{4H_0}a_{k-1}, \quad \text{for $k=2,\cdots, n$.}
\end{equation}

We see that $\gamma_n$ is a function of $a_i$:
\begin{align*}
  \gamma_n(a_1,\cdots,a_n) = \prod_{k=1}^{n-1}
  \frac{\Gamma\left(\theta_k+\frac{1-2H}{4H_0}\left(a_k+a_{k+1}-2\right)\right)}{\Gamma\left(\theta_{k}\right)}.
\end{align*}
Recall that any choice of $a_i$ corresponds to a path as shown in Figure \ref{F:a}. We claim that
\begin{align} \label{E:claim}
 \text{when we move the path downwards, the value of $\gamma_n$ decreases.}
\end{align}
As a consequence, the path that achieves the maximum for $\gamma_n$ is the one going through $(i,i)$
for $i=1,\cdots, n$, i.e., the straight diagonal line --- the topmost line. In this case, we have
all $a_i$ are equal to one and hence $a_i+a_{i+1}=2$ for all  $i=1,\cdots, n-1$. Therefore,
\begin{align} \label{gamma-bound}
  \gamma_n(a_1,\cdots,a_n) \le \gamma(1,\cdots,1) = 1.
\end{align}

It remains to prove the claim \eqref{E:claim}. Note that all paths stay between the diagonal and the
line parallel to the diagonal, one unit down. If the path does not touch the diagonal, then no
action is taken (the argument below will show that the value of $\gamma_n$ is minimal for this
path).

Let $(a_1,\cdots,a_n)\in A_n$ be a path which touches the diagonal on at least one point. Say this
point is $(i+1,i+1)$ with $i+1<n$. (The case $i+1=n$ is similar.) We compare the value
$\gamma_n(a_1,\ldots,a_n)$ with the value $\gamma_n(a_1',\ldots,a_n')$ corresponding to another path
$(a_1',\ldots,a_n') \in A_n$ which is obtained by moving the point $(i+1,i+1)$ 1 unit down. There
are 4 possible cases for the shapes of the two paths around the point $(i+1,i+1)$, which are
illustrated in Figure \ref{F:4Cases}. Since $a_k$ gives the number of points that the path
$(a_1,\ldots,a_n)$ has on line $k$, it follows that in all 4 cases, $a_i'=a_i+1$ (since line $i$
received one point), $a_{i+1}'=a_{i+1}-1$ (since line $i+1$ lost one point) and $a_{k}'=a_k$ for all
$k \not \in \{i,i+1\}$ (since the rest of the path remains unchanged). Hence,

\begin{figure}[htpb]

  \newcommand{\Cases}[1]{
    \filldraw      (0,-1) circle (0.07) node[below,yshift=-4pt] {$x_{i-1}$};
    \filldraw      (0, 0) circle (0.07) node[left, xshift=-4pt] {$x_{i  }$};
    \filldraw[red] (1, 0) circle (0.07) node[right,xshift=+4pt] {$x_{i  }$};
    \filldraw[red] (1, 1) circle (0.07) node[left, xshift=-4pt] {$x_{i+1}$};
    \filldraw      (2, 1) circle (0.07) node[below,yshift=-6pt] {$x_{i+1}$};
    \filldraw      (2, 2) circle (0.07) node[left, xshift=-4pt] {$x_{i+2}$};

    \draw [double, ->, shorten >= 0.5em, shorten <= 0.5em,thick] (1,1) -- (1,0);
    \node at (1,2.75) {Case #1};
    \draw[dashed] (-1.5,-2) --++ (1,0);
    \draw (-0.5,-2) -- (2.8,-2);
    \draw[dashed,->] (2.8,-2) --++ (0.8,0);
    \draw[dashed] (-1.2,-2.2) --++ (0,1);
    \draw (-1.2,-1.2) --++ (0,3.5);
    \draw[dashed,->] (-1.2,2.2) --++ (0,0.8);
    \draw[dotted] (0,0) --++ (0,-2);
    \draw[dotted] (1,1) --++ (0,-3);
    \draw[dotted] (2,2) --++ (0,-4);
    \draw (0,-1.9) --++ (0,-0.2) node[below] {\small $i$};
    \draw (1,-1.9) --++ (0,-0.2) node[below] {\small $i+1$};
    \draw (2,-1.9) --++ (0,-0.2) node[below] {\small $i+2$};
    \draw (-1.1,0) --++ (-0.2,0) node[left]  {\small $i$};
    \draw (-1.1,1) --++ (-0.2,0) node[left]  {\small $i+1$};
    \draw (-1.1,2) --++ (-0.2,0) node[left]  {\small $i+2$};
    \draw (-1.1,-1)--++ (-0.2,0) node[left]  {\small $i-1$};
  }
  \begin{center}
    \begin{tikzpicture}[scale=0.9, transform shape]
      \tikzset{>=latex}
      \Cases{I}

      \node at (-4,2.9) {$a_k'-a_k$};
      \node at (-7.5,2.9) {$(a_k'+a_{k+1}')-(a_k+a_{k+1})$};
      \draw (-13.8,2.5) --++ (11,0);
      \draw (-12.1,3.2) --++(0,-5.1);
      \node at (-11,2.9) {$\theta_k'-\theta_{k}$};
      \node at (-13,2.9) {$k$};

      \node at (-4,+2) {$0$};
      \node at (-4,+1) {$-1$};
      \node at (-4, 0) {$1$};
      \node at (-4,-1) {$0$};

      \node at (-7.8,+2) {$0$};
      \node at (-7.8,+1) {$-1$};
      \node at (-7.8, 0) {$0$};
      \node at (-7.8,-1) {$1$};

      \node at (-11,+2) {$0$};
      \node at (-11,+1) {$\dfrac{1-2H}{4H_0}$};
      \node at (-11, 0) {$0$};
      \node at (-11,-1) {$0$};

      \node at (-13,+2) {$i+2$};
      \node at (-13,+1) {$i+1$};
      \node at (-13, 0) {$i$};
      \node at (-13,-1) {$i-1$};

      \draw[very thick] (0,0) -- (1,1) -- (2,2);
      \draw[dashed,very thick] (0,0) -- (1,0) -- (2,2);
    \end{tikzpicture}
    \bigskip

    \begin{tikzpicture}[scale=0.8, transform shape]
      \tikzset{>=latex}
      \Cases{II}
      \draw[very thick] (0,-1) -- (1,1) -- (2,2);
      \draw[dashed,very thick] (0,-1) -- (1,0) -- (2,2);
    \end{tikzpicture}
    \quad
    \begin{tikzpicture}[scale=0.8, transform shape]
      \tikzset{>=latex}
      \Cases{III}
      \draw[very thick] (0,0) -- (1,1) -- (2,1);
      \draw[dashed,very thick] (0,0) -- (1,0) -- (2,1);
    \end{tikzpicture}
    \quad
    \begin{tikzpicture}[scale=0.8, transform shape]
      \tikzset{>=latex}
      \Cases{IV}
      \draw[very thick] (0,-1) -- (1,1) -- (2,1);
      \draw[dashed,very thick] (0,-1) -- (1,0) -- (2,1);
    \end{tikzpicture}
  \end{center}
  \caption{Four cases for the positions of the paths $(a_1,\ldots,a_n)$ and $(a_1',\ldots,a_n')$
  around point $(i+1,i+1)$. The values of $a_k'-a_k$, $(a_k'+a_{k+1}')-(a_k+a_{k+1})$, and
$\theta_k'-\theta_k$ are the same for all four cases.}

  \label{F:4Cases}
\end{figure}

\begin{align*}
  (a_k'+a_{k+1}')-(a_k+a_{k+1})=
  \begin{cases}
    1  & \text{if $k=i-1$} \\
    -1 & \text{if $k=i+1$} \\
    0  & \text{otherwise}
  \end{cases}
  \quad \text{and} \quad
 \theta_k'-\theta_k=
 \begin{cases}
   \frac{1-2H}{4H_0} & \text{if $k=i+1$} \\
    0                & \text{otherwise}
  \end{cases}.
\end{align*}

By direct calculation, we see that
\begin{align*}
  \theta_{i-1}'+\frac{1-2H}{4H_0}(a_{i-1}'+a_{i}'-2) & = \theta_{i-1}+\frac{1-2H}{4H_0}(a_{i-1}+a_{i}-2)+\frac{1-2H}{4H_0} \quad \text{and} \\
  \theta_k'+\frac{1-2H}{4H_0}(a_{k}'+a_{k+1}'-2)     & = \theta_k+\frac{1-2H}{4H_0}(a_{k}+a_{k+1}-2)
  \quad \text{for all $k \not= i-1$}.
\end{align*}
Therefore,
\begin{align*}
  \frac{\gamma\left(a_1',\cdots,a_n'\right)}{\gamma\left(a_1,\cdots,a_n\right)}
 & = \prod_{k=1}^{n-1}\dfrac{\Gamma(\theta_k'+\frac{1-2H}{4H_0}(a_k'+a_{k+1}'-2))}{\Gamma(\theta_k+ \frac{1-2H}{4H_0}(a_k+a_{k+1}-2))}
     \times \prod_{k=1}^{n-1}\dfrac{\Gamma(\theta_k)}{\Gamma(\theta_k')}\\
 & = \frac{\Gamma\left(\theta_{i-1}+\frac{1-2H}{4H_0}(a_{i-1}+a_{i}-2)+\frac{1-2H}{4H_0}\right)}{\Gamma\left(\theta_{i-1}+ \frac{1-2H}{4H_0}(a_{i-1}+a_{i}-2)\right)}
     \times \frac{\Gamma(\theta_{i+1})}{\Gamma\left(\theta_{i+1}+\frac{1-2H}{4H_0}\right)}\:.
\end{align*}

By applying Lemma \ref{L:Gamma}, we see that the above ratio is always less than or equal to one.
For this, we need to check that
\begin{align*}
  z_1:=\theta_{i-1}+\frac{1-2H}{4H_0}(a_{i-1}+a_i-2)\leq z_2:=\theta_{i+1}.
\end{align*}
This is clear, since by \eqref{E:theta}, $\theta_{i+1}=\theta_{i-1}+2
\frac{4H_0+4H-3}{4H_0}+\frac{1-2H}{4H_0}(a_{i-1}+a_i-2)$. Here we use again condition
\eqref{E:H-cond}. This proves the claim in \eqref{E:claim}. \bigskip

{\noindent\em Step 5.}  We claim that for $n$ large enough,
\begin{equation} \label{E:LB}
  \Gamma\left(|\widetilde{\alpha}|+|\widetilde{\beta}|+n+1\right) \geq C^n (n!)^{\frac{2H_0+H-1}{2H_0}}.
\end{equation}
Indeed, by \eqref{E:sum-ab},
\begin{align*}
    |\widetilde{\alpha}|+|\widetilde{\beta}|+n+1
  = n\frac{2H_0+H-1}{2H_0}-\frac{1+\alpha_n}{4H_0}+1 \geq n\frac{2H_0+H-1}{2H_0}-\frac{1-H}{2H_0}+1.
\end{align*}
We use the fact that for any $a>0,b \in \bR$, there exists $N_{a,b} \in \bN$ depending on $a$ and
$b$ such that $\Gamma(an+1+b) \geq C_{a,b}^n (n!)^a$ for all $n \geq N_{a,b}$. Since $\Gamma$ is
increasing on $(2,\infty)$, we see that \eqref{E:LB} holds true. Therefore, thanks to \eqref{E:LB},
we see that for $n$ large enough,
\begin{align*}
  \int_{\{0<t_1<\ldots<t_n<t\}}
  \prod_{i=1}^{n}t_i^{\widetilde{\alpha}_i} (t_{i+1}-t_i)^{\widetilde{\beta}_i} \ud{\bf t}
  \leq
  C^n (n!)^{-\frac{2H_0+H-1}{2H_0}}
  t^{n \frac{2H_0+H-1}{2H_0}-\frac{1+\alpha_n}{4H_0}}.
\end{align*}
Returning to \eqref{E:norm-f}, it follows that $\bE\left(|J_n(t,x)|^2\right) \leq J_0^{2}(t,x)C^n
(n!)^{-H} t^{n (2H_0+H-1)}$. Finally, by hypercontractivity, the $\|\cdot\|_p$-norm on $L^p(\Omega)$
is equivalent to the $\|\cdot\|_2$-norm (see e.g. page 62 of \cite{nualart:06:malliavin}), and hence
\begin{align*}
    \|u(t,x)\|_p
  & \leq \sum_{n\geq 0}(p-1)^{n/2}\|J_n(t,x)\|_2 \leq J_0(t,x) \sum_{n\geq 0}(p-1)^{n/2}C^{n/2} \frac{1}{(n!)^{H/2}}t^{n \frac{2H_0+H-1}{2} } \\
  & \leq C \exp\left(C p^{1/H}t^{\frac{2H_0+H-1}{H}} \right),
\end{align*}
where for the last line we used the fact that $\sum_{n\geq 0}\frac{x^n}{(n!)^a} \leq C \exp(c
x^{1/a})$ for any $x>0$ and $a>0$. We conclude the proof of Theorem \ref{T:main} by taking power
$p$.
\end{proof}

\appendix
\section{Some auxiliary results}
\begin{lemma} \label{L:Int}
  Suppose that $\alpha_1>-1$, $\beta_i>-1$ for any $i=1,\ldots,n$, and
  \begin{equation} \label{cond-C}
    \sum_{i=1}^{k} (\alpha_i+\beta_i)+k+1+\alpha_{k+1}>0 \quad \mbox{for all} \ k=1,\ldots,n-1.
  \end{equation}
  Then by setting $t_{n+1}=t$, $|\alpha|=\sum_{i=1}^{n}\alpha_i$ and
  $|\beta|=\sum_{i=1}^{n}\beta_i$, we have that
  \begin{align} \label{lemA-id}
    I_n & (t,\alpha_1,\ldots,\alpha_n,\beta_1,\ldots,\beta_n):=\int_{\{0<t_1<\ldots<t_n<t\}} \prod_{i=1}^{n}t_i^{\alpha_i}(t_{i+1}-t_i)^{\beta_i}\ud{\bf t} \nonumber \\
        & =\frac{\Gamma(\alpha_1+1) \prod_{i=1}^{n}\Gamma(\beta_i+1)}{\Gamma\big(|\alpha|+|\beta|+n+1\big)}
           \prod_{k=1}^{n-1}\frac{\Gamma\big(\sum_{i=1}^{k}(\alpha_i+\beta_i)+k+1+\alpha_{k+1}\big) }{\Gamma\big(\sum_{i=1}^{k}(\alpha_i+\beta_i)+k+1\big)} t^{|\alpha|+|\beta|+n}.
  \end{align}
\end{lemma}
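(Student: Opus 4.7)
The plan is to prove the formula by induction on $n$, integrating out the innermost variable $t_1$ at each step so as to generate a Beta function and shift the parameters in a controlled way.

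For the base case $n=1$, the integral $\int_0^t t_1^{\alpha_1}(t-t_1)^{\beta_1}\,dt_1$ is the standard Beta integral, equal to $\Gamma(\alpha_1+1)\Gamma(\beta_1+1)/\Gamma(\alpha_1+\beta_1+2) \cdot t^{\alpha_1+\beta_1+1}$, which matches the claimed formula (the product over $k$ is empty). For the inductive step, first integrate with respect to $t_1$ over $(0,t_2)$: since $\alpha_1>-1$ and $\beta_1>-1$,
\begin{equation*}
\int_0^{t_2} t_1^{\alpha_1}(t_2-t_1)^{\beta_1}\,dt_1 = \frac{\Gamma(\alpha_1+1)\Gamma(\beta_1+1)}{\Gamma(\alpha_1+\beta_1+2)}\, t_2^{\alpha_1+\beta_1+1}.
\end{equation*}
This reduces $I_n$ to a Beta factor times an $(n-1)$-fold integral in the variables $s_j := t_{j+1}$ ($j=1,\dots,n-1$) over the simplex $0<s_1<\dots<s_{n-1}<t$, with new parameters $\alpha'_1=\alpha_1+\beta_1+1+\alpha_2$, $\alpha'_j=\alpha_{j+1}$ for $j\ge 2$, and $\beta'_j=\beta_{j+1}$ for $j\ge 1$.

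Next I would verify that the hypotheses of the lemma persist under this reduction, so that the inductive hypothesis applies. The condition $\alpha'_1>-1$ becomes $\alpha_1+\beta_1+\alpha_2+2>0$, which is exactly \eqref{cond-C} at $k=1$; and the general condition $\sum_{i=1}^{k}(\alpha'_i+\beta'_i)+k+1+\alpha'_{k+1}>0$ for $k=1,\dots,n-2$ telescopes to $\sum_{i=1}^{k+1}(\alpha_i+\beta_i)+k+2+\alpha_{k+2}>0$, i.e., \eqref{cond-C} at index $k+1$. The $\beta'_j>-1$ conditions are inherited trivially.

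Applying the inductive hypothesis then yields an explicit formula in terms of $\alpha'$ and $\beta'$, and the final step is a bookkeeping check: $|\alpha'|+|\beta'|+n-1 = |\alpha|+|\beta|+n$; the product $\prod_{j=1}^{n-1}\Gamma(\beta'_j+1)$ recovers $\prod_{i=2}^{n}\Gamma(\beta_i+1)$; and the Beta factor $\Gamma(\alpha_1+1)\Gamma(\beta_1+1)/\Gamma(\alpha_1+\beta_1+2)$ combined with $\Gamma(\alpha'_1+1)=\Gamma(\alpha_1+\beta_1+\alpha_2+2)$ reproduces the $k=1$ term $\Gamma(\alpha_1+\beta_1+\alpha_2+2)/\Gamma(\alpha_1+\beta_1+2)$ of the target product, with a factor $\Gamma(\alpha_1+1)\Gamma(\beta_1+1)$ left over as required. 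The remaining factors $\prod_{k=1}^{n-2}$ in the inductive formula correspond, after the shift $k\mapsto k+1$ established above, to the factors for $k=2,\dots,n-1$ in the target formula.

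The main obstacle is not analytic but bookkeeping: one has to carefully track the index shift $k\mapsto k+1$ and confirm that the numerator and denominator Gamma functions in the inductive hypothesis align exactly with those in the target product. Once this alignment is verified for a generic $k$ and the $k=1$ term is matched with the extracted Beta factor, the identity \eqref{lemA-id} follows.
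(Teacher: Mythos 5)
Your proof is correct and follows the same basic strategy as the paper's: induction on $n$ reducing each step to a Beta integral. The only difference is the direction of the peeling --- the paper integrates out the outermost variable, writing $I_n(t,\alpha_1,\ldots,\alpha_n,\beta_1,\ldots,\beta_n)=\int_0^t t_n^{\alpha_n}(t-t_n)^{\beta_n}I_{n-1}(t_n,\alpha_1,\ldots,\alpha_{n-1},\beta_1,\ldots,\beta_{n-1})\,\ud t_n$ so that the parameters are unchanged and each step simply appends one factor to the telescoping product, whereas you integrate out $t_1$, which forces the parameter shift $\alpha_1'=\alpha_1+\beta_1+1+\alpha_2$ and the re-verification that condition \eqref{cond-C} is inherited; you carry out that extra bookkeeping correctly.
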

\begin{proof}
  The lemma is proved by induction. For $n=1$, we have:
  \begin{align*}
    I_1(t,\alpha,\beta)=\int_0^t t_1^{\alpha_1}(t-t_1)^{\beta_1}\ud t_1=\frac{\Gamma(\alpha_1+1)
    \Gamma(\beta_1+1)}{\Gamma(\alpha_1+\beta_1+2)}t^{\alpha_1+\beta_1+1}.
  \end{align*}
  For the induction step, we use the fact that
  \begin{align*}
    I_n(t,\alpha_1,\ldots,\alpha_n,\beta_1,\ldots,\beta_n)=\int_0^t t_n^{\alpha_n}(t-t_n)^{\beta_n} I_{n-1}(t_n,\alpha_1,\ldots,\alpha_{n-1},\beta_1,\ldots,\beta_{n-1})\ud t_n.
  \end{align*}
  This proves the lemma.
\end{proof}

\begin{lemma} \label{L:Gamma}
  For any $a>0$, the function $z \mapsto \Gamma(z+a)/\Gamma(z)$ is non-decreasing on $(0,\infty)$.
\end{lemma}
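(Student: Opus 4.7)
The plan is to reduce the monotonicity of the ratio to the well-known monotonicity of the digamma function. Set $f(z):=\Gamma(z+a)/\Gamma(z)$, which is strictly positive on $(0,\infty)$ since $\Gamma$ is. Taking logarithmic derivatives,
$$\frac{f'(z)}{f(z)} = \psi(z+a) - \psi(z),$$
where $\psi := (\log \Gamma)'$ is the digamma function. Since $f(z)>0$, it suffices to show that the right-hand side is non-negative for every $z>0$ and $a>0$.

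For this, I would invoke the standard series representation
$$\psi'(z) = \sum_{n=0}^{\infty}\frac{1}{(z+n)^{2}},$$
which is strictly positive on $(0,\infty)$, so $\psi$ is strictly increasing on $(0,\infty)$. In particular $\psi(z+a)\ge \psi(z)$ whenever $a>0$, whence $f'(z)\ge 0$. An equivalent one-line formulation is to note that $\log f(z) = \int_{z}^{z+a}\psi(u)\,du$ by the fundamental theorem of calculus, and that translating the interval of integration to the right cannot decrease the integral of the increasing function $\psi$.

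There is no real obstacle here; the only ingredient beyond calculus is the positivity of the trigamma series, which is immediate from the Weierstrass product for $\Gamma$. If one wishes to avoid $\psi$ altogether, an alternative is the integral representation
$$\psi(z+a)-\psi(z) = \int_{0}^{\infty}\frac{e^{-zt}\bigl(1-e^{-at}\bigr)}{1-e^{-t}}\,dt,$$
whose integrand is manifestly non-negative for $z,a>0$, yielding the same conclusion.
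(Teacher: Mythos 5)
Your proof is correct and follows essentially the same route as the paper: both compute $f'(z)=f(z)\left(\psi(z+a)-\psi(z)\right)$ and conclude from the monotonicity of the digamma function $\psi$ on $(0,\infty)$. The only cosmetic difference is that you justify this monotonicity via the positivity of the trigamma series $\psi'(z)=\sum_{n\ge 0}(z+n)^{-2}$, whereas the paper reads it off directly from the series representation of $\psi$ itself.
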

\begin{proof}
  Let $f(z)=\Gamma(z+a)/\Gamma(z)$. Note that $f'(z)=f(z) \left(\psi(z+a)-\psi(z)\right)$, where
  $\psi(z):=\Gamma'(z) / \Gamma(z)$ is the {\it psi function}; see 5.2.2 on p. 136 of
  \cite{olver.ea:10:NIST}. By the following expression
  \begin{align*}
    \psi(z)=-\gamma+\sum_{k\geq 1}\left( \frac{1}{k+1}-\frac{1}{k+z}\right) \quad \mbox{for any} \quad z>0,
  \end{align*}
  with $\gamma=\lim_{n\to \infty}(\sum_{k=1}^n k^{-1}-\ln n) \approx 0.5772$ being the Euler's
  constant (see, e.g., 5.7.6 on p. 139 {\it ibid.}), we see that $\psi$ is a nondecreasing function
  on $(0,\infty)$. Hence, $f'(z)\ge 0$ for all $z>0$, which implies the desired result.
\end{proof}


\begin{thebibliography}{99}
  \bibitem{amir.corwin.ea:11:probability}
  Amir, G., Corwin, I., and Quastel J. (2011).
  \newblock Probability distribution of the free energy of the continuum directed random polymer in {$1+1$} dimensions.
  \newblock {\em Comm. Pure Appl. Math.}, {\bf 64}, no. 4, 466--537.
  \bibitem{balan.chen:18:parabolic}
  Balan, R.M. and Chen, L. (2018).
  \newblock Parabolic Anderson Model with space-time homogeneous Gaussian noise and rough initial condition.
  \newblock {\em J. Theoret. Probab.} {\bf 31}, 2216--2265.
  \bibitem{balan.jolis.ea:15:spde}
  Balan, R.M., Jolis, M. and Quer-Sardanyons, L. (2015).
  \newblock SPDEs with affine multiplicative fractional noise in space with index $H \in (1/4,1/2)$.
  \newblock {\em Electr. J. Probab.} {\bf 20}, no. 54, 1--36.
  \bibitem{chen.dalang:15:moments}
  Chen, L. and Dalang, R.C. (2015).
  \newblock Moments and growth indices for the nonlinear stochastic heat equation with rough initial conditions.
  \newblock {\em Ann. Probab.}, {\bf 43}, No. 6, 3006--3051.
  \bibitem{chen.huang:19:comparison}
  Chen, L. and Huang, J. (2019).
  \newblock Comparison principle for stochastic heat equation on $\R^d$.
  \newblock {\em Ann.\ Probab.}  {\bf 47}, no. 2, 989--1035.
  \bibitem{chen.kim:19:nonlinear} 
  Chen, L. and Kim, K. (2019).
  \newblock Nonlinear stochastic heat equation driven by spatially colored noise: moments and intermittency.
  \newblock {\em  Acta Math. Sci. Ser. B}, \textbf{39}(3),  645--668.
  \bibitem{chen:19:parabolic} 
  Chen, X. (2019).
  \newblock Parabolic Anderson model with rough or critical Gaussian noise.
  \newblock {\em Ann. Inst. Henri Poincar\'e: Prob. Stat.} {\bf 55}, 941--976.
  \bibitem{hu.le:19:joint}
  Hu, Y. and L\^{e}, K. (2019).
  \newblock Joint H\"older continuity of parabolic Anderson model.
  \newblock {\em Acta Math. Sci.} {\bf 39}, 764--780.
  \bibitem{hu.huang.ea:18:parabolic}
  Hu, Y., Huang, J., L\^e, K., Nualart, D. and Tindel, S. (2018).
  \newblock Parabolic Anderson model with rough dependence in space.
  \newblock In: ``{\it Abel Symposia 2016: Computations and Combinatorics in Dynamics, Stochastics and Control}'', {\bf 16}, 477--498. Springer, Cham.
  \bibitem{memin.mishura.ea:01:inequalities}
  M\'{e}min, J., Mishura, Y. and Valkeila, E. (2001).
  \newblock Inequalities for the moments of Wiener integrals with respect to a fractional Brownian motion.
  \newblock {\it Statist. Probab. Lett.} {\bf 51}, No. 2, 197--206.
  \bibitem{nualart:06:malliavin}
  Nualart D. (2006).
  \newblock {\em The Malliavin Calculus and Related Topics}. Second edition.
  \newblock Springer-Verlag, Berlin.
  \bibitem{olver.ea:10:NIST}
  Olver, F.~W.~J., Lozier, D.~W., Boisvert, R.~F. and Clark, C.~W. (2010).
  \newblock {\em N{IST} handbook of mathematical functions}.
  \newblock U.S. Department of Commerce National Institute of Standards and Technology, Washington, DC. Cambridge Univ. Press, Cambridge.
  \bibitem{rudin:91:functional}
  Rudin, W. (1991).
  \newblock {\it Functional analysis. } Second edition.
  \newblock International Series in Pure and Applied Mathematics. McGraw-Hill, Inc., New York.
  \bibitem{song.song.ea:20:fractional}
  Song, J., Song, X. and Xu, F. (2020).
  \newblock Fractional stochastic wave equation driven by a Gaussian noise rough in space.
  \newblock {\em Bernoulli} {\bf 26}, 2699--2726.
\end{thebibliography}
\end{document}